\theoremstyle{plain} 
\newtheorem{theorem}{Theorem}%[section]
\newtheorem{proposition}[theorem]{Proposition}
\theoremstyle{definition} 
\theoremstyle{definition} 
\newtheorem*{ex*}{Example}
\theoremstyle{remark} 
\theoremstyle{remark} 
\newtheorem*{remark*}{Remark}
\newcommand{\beqa}{\begin{eqnarray}}
\newcommand{\eeqa}{\end{eqnarray}}
\newcommand{\bseq}{\begin{subequations}}
\newcommand{\eseq}{\end{subequations}}
\newcommand{\N}{\{1,2,\dots\}}
\renewcommand{\N}{{\mathbb{N}}}
\newcommand{\e}[1]{e^{(#1)}}
\newcommand{\f}[2]{f^{(#1,#2)}}
\newcommand{\aaa}[1]{a^{(#1)}}
\newcommand{\aaaa}[1]{a^{|#1}}
\newcommand{\Si}{\Sigma}
\renewcommand{\P}{\operatorname{\mathsf{P}}} 
\newcommand{\E}{\operatorname{\mathsf{E}}}
\newcommand{\R}{\mathbb{R}}
\newcommand{\G}{\overline{\Phi}}
\newcommand{\vp}{\varepsilon}
\renewcommand{\le}{\leqslant}
\renewcommand{\ge}{\geqslant}
\begin{document}

\begin{frontmatter}

\title{On the supremum of the tails of normalized sums of independent Rademacher random variables}
%\title{Refinement of the Bennett-Hoeffding inequality}
\runtitle{Supremum of Rademacher tails}
%\date{\today}

% \author{\fnms{First}  \snm{Author}\corref{}\thanksref{t2}\ead[label=e1]{first@somewhere.com}},
%  \author{\fnms{Second} \snm{Author}\ead[label=e2]{second@somewhere.com}}
%  \and
%  \author{\fnms{Third}  \snm{Author}%
%  \ead[label=e3]{third@somewhere.com}%
%  \ead[label=u1,url]{http://www.foo.com}}
%
%  \thankstext{t2}{Footnote to the first author with the `thankstext' command.}

\begin{aug}
\author{\fnms{Iosif} \snm{Pinelis}\thanksref{t2}\ead[label=e1]{ipinelis@mtu.edu}
}
  \thankstext{t2}{Supported by NSA grant H98230-12-1-0237}
\runauthor{Iosif Pinelis}

%\affiliation{Michigan Technological University}

\address{Department of Mathematical Sciences\\
Michigan Technological University\\
Houghton, Michigan 49931, USA\\
E-mail: \printead[ipinelis@mtu.edu]{e1}}
\end{aug}

\begin{abstract}
A well-known longstanding conjecture on the supremum of the tails of normalized sums of independent Rademacher random variables is disproved. A related conjecture, also recently disproved, is discussed.  
\end{abstract}

%\subjclass[2000]{60E15, 62G10, 62G15, 60G50, 62G35}
% 62G10    	Hypothesis testing
%  62G15    	Tolerance and confidence regions
%  60G50    	Sums of independent random variables; random walks
%   62G35    	Robustness
  
%
%\keywords{probability inequalities; Rade\-macher random variables; sums of independent random variables; Student's test; self-normalized sums}

\begin{keyword}[class=AMS]
\kwd[Primary ]{60E15}
%\kwd{60G42}
%\kwd{62G10}
\kwd[; secondary ]%{60F10}
%\kwd{62G10}
%\kwd{62G15}
%\kwd
{60G50}
%\kwd{62G35}
%\kwd{60G51}
\end{keyword}

% 60F10    	Large deviations

\begin{keyword}
\kwd{probability inequalities}
\kwd{tail probabilities}
\kwd{Rade\-macher random variables}
\kwd{sums of independent random variables}
%\kwd{Student's test}
%\kwd{self-normalized sums}
%\kwd{Esscher--Cram\'er tilt transform}
%\kwd{generalized moments}
%\kwd{Tchebycheff--Markov systems}
%%\kwd{L\'evy processes}
\end{keyword}

\end{frontmatter}

\settocdepth{chapter}

%\tableofcontents 
%%%%%%%%%%%%%%%%%{\small\tableofcontents} 

\settocdepth{subsubsection}

%\renewcommand{\baselinestretch}{1.5}

%\eject

%\section{Introduction, summary, and discussion}\label{intro} 

Let $\vp:=(\vp_1,\vp_2,\dots)$ be the sequence of independent Rademacher random variables (r.v.'s), so that $\P(\vp_i=1)=\P(\vp_i=-1)=\frac12$ for all $i$.  
Consider the ``maximal'' tail function 
\begin{equation*}
	M(x):=\sup\big\{\P(a\cdot\vp\ge x)\colon a\in\Si\big\} 
\end{equation*}
of the normalized Rademacher sums $a\cdot\vp:=a_1\vp_1+a_n\vp_2+\cdots$, 
where $x\in\R$ and 
\begin{equation*}
	\Si:=\big\{a=(a_1,a_2,\dots)\in\R^\N\colon a_1^2+a_2^2+\dots=1\big\} 
\end{equation*} 
is the unit sphere in $\ell^2$. 
The behavior of $\P\big(a\cdot\vp\ge x\big)$ as a function of $a$ and $x$ is very complicated; e.g., Fig.~1 in \cite{pin-towards} suggests that even for the fixed $a=\big(\underbrace{\tfrac1{10},\dots,\tfrac1{10}}_{100},0,0,\dots\big)$, the tail function $\P\big(a\cdot\vp\ge \cdot\big)$ behaves rather erratically. 

Yet, a number of features of the maximal tail function $M$ are known. 
An easy consequence of the central limit theorem is that 
\begin{equation}\label{eq:lower}
	M(x)\ge\G(x)
\end{equation}
for all $x\in\R$, where $\G$ is the standard normal tail function. 
Recently an upper bound on $M(x)$ was obtained, which is asymptotically equivalent for $x\to\infty$ to the lower bound $\G(x)$ in \eqref{eq:lower} --- see \cite{G-R_arxiv}, as well as references therein, including ones to applications %on matters of interest 
in probability/statistics and combinatorics/optimization/ operations research. 

A well-known conjecture, apparently due to Edelman \cite{portnoy,edelman-comm}, has been that for all $x\in\R$ 
\begin{equation}\label{eq:conjec}
	M(x)\overset{(?)}=M^=(x), %:=\sup_{n\in\N}\P\big(\e n\cdot\vp\ge x\big), 
\end{equation}
where 
\begin{equation*}
M^=(x):=\sup_{n\in\N}\P\big(\e n\cdot\vp\ge x\big)\quad\text{and}\quad 
\e n:=\big(\underbrace{\sqrt{\tfrac1n},\dots,\sqrt{\tfrac1n}}_n,0,0,\dots\big),  	
\end{equation*}
so that $\e n\in\Si$ for all $n$ and hence $M(x)\ge M^=(x)$ for all $x\in\R$. 
Thus, the conjecture could be restated as $M(x)\overset{(?)}\le M^=(x)$ for all $x\in\R$. 

As apparently most other people working in the area, I had believed this conjecture to be true ---  
until hearing 
%was told 
recently that it had been disproved by A.~V.~Zhubr. 
Then I wrote to him to request further information. 
While waiting for a response %and believing that %, 
and being aided by the newly acquired belief that 
conjecture \eqref{eq:conjec} is false, I quickly happened to find a counterexample to the conjecture, using the following heuristics, which may be of interest to readers.  
Natural and closest competitors of the ``equal-weight'' sequences $a=\e n$ in terms of having a greater value of $\P(a\cdot\vp\ge x)$ are sequences $a$ of the form 
\begin{equation}\label{eq: f nt}
	\f nt:=\big(\underbrace{\sqrt{\tfrac{1-t^2}n},\dots,\sqrt{\tfrac{1-t^2}n}}_n,t,0,0,\dots\big)\in\Si \quad\text{for}\quad t\in(0,1)   
\end{equation}
and $n\in\N$ --- 
cf.\ e.g.\ \cite[Proof of Proposition~2]{maximal}, where a conjecture presented in \cite{g-peskir} was somewhat similarly disproved; \cite[Proof of Theorem~4.2]{BGH}; and \cite[Proof of Theorem~2]{pin-towards}. 
Clearly, for all $t\in(0,1)$, 
\begin{equation}\label{eq:kolm}
	\P\big(\f nt\cdot\vp\ge x\big)=\tfrac12\P\big(\e n\cdot\vp\ge u\big)+\tfrac12\P\big(\e n\cdot\vp\ge v\big), 
\end{equation}
where 
\begin{equation}\label{eq:u,v}
	u:=\frac{x-t}{\sqrt{1-t^2}}\quad\text{and}\quad v:=\frac{x+t}{\sqrt{1-t^2}}. 
\end{equation} 
Note that for any $x\in(0,1]$ one has $M(x)=M^=(x)=\P(\vp_1\ge x)=\frac12$, so that \eqref{eq:conjec} holds. 
So, as least as far as positive values of $x$ are concerned, one may assume $x>1$.  
Note next that the tail of the r.v. $\e n\cdot\vp$ decreases fast, just as the corresponding normal tail does (and even faster, in a sense, since it simply vanishes in a neighborhood of $\infty$). Therefore, of the two tail values in \eqref{eq:kolm}, $\P\big(\e n\cdot\vp\ge u\big)$ and $\P\big(\e n\cdot\vp\ge v\big)$, the first one is greater, and it may be much greater if $t\in(0,1)$ is not too close to $0$. 
Thus, for a given value of $x$, to make the tail value $\P\big(\f nt\cdot\vp\ge x\big)$ compete with $M^=(x)$, 
%$\sup_{m\in\N}\P\big(\e m\cdot\vp\ge x\big)$, 
one may try to focus on making $\P\big(\e n\cdot\vp\ge u\big)$ large. 
Since the inequality $\e n\cdot\vp\ge u$ is non-strict, it appears to make sense to choose $x$ and $t$ in \eqref{eq:u,v} so that $u$ be one of the atoms of the distribution of the r.v.\ $\e n\cdot\vp$. Thus, assume that $u=k/\sqrt n$ for some $k\in\{0,\dots,n\}$, whence  
\begin{equation}\label{eq:eq}
	x=t+\tfrac k{\sqrt n}\,\sqrt{1-t^2}. 
\end{equation} 
Similarly, to make $\sup_{m\in\N}\P\big(\e m\cdot\vp\ge x\big)=M^=(x)$ comparatively small, one may want to choose $x$ other than the rightmost atom, $\sqrt m$, of the distribution of the r.v.\ $\e m\cdot\vp$, for any $m\in\N$; cf.\ e.g.\ Lemma~1 in \cite{pin-towards} and its (very short) proof therein; 
moreover then, one 
may try to choose $x$ to be about as far away as possible from any of these rightmost atoms, $\sqrt m$. 
%Note also that for any $x\in(0,1]$ one has $M(x)=M^=(x)=\P(\vp_1\ge x)=\frac12$, so that \eqref{eq:conjec} holds. 
Thus, it appears to make sense to try the values $x=\sqrt{\frac{14}{10}},\sqrt{\frac{24}{10}},\sqrt{\frac{34}{10}},\dots$ --- each of these values taken together with all small enough natural $n$; $k\in\{0,\dots,n\}$; and the corresponding values of $t$ found, for each such triple $(x,n,k)$, as a root of equation \eqref{eq:eq}. 

This approach indeed results in a quadruple $(x,n,k,t)$ with 
$x=\sqrt{\frac{74}{10}}=\sqrt{\frac{37}5}$, $n=10$, $k=8$, and $t=\sqrt{\frac5{37}}$, 
which disproves conjecture \eqref{eq:conjec}: 

\begin{proposition}\label{prop:}
For $n=10$, $x=\sqrt{\tfrac{37}5}$, and $t=\sqrt{\tfrac5{37}}$, 
\begin{equation}\label{eq:main}
		M(x)\ge
		\P\big(\f nt\cdot\vp\ge x\big)
		>M^=(x). 
\end{equation}
\end{proposition}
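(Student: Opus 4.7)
The first inequality $M(x)\ge\P(\f{n}{t}\cdot\vp\ge x)$ in \eqref{eq:main} is trivial, since $\f{n}{t}\in\Si$. What remains is an exact evaluation of the middle quantity and an upper bound on $M^=(x)$.

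For the middle quantity, I would substitute the given values $n=10$, $x=\sqrt{37/5}$, $t=\sqrt{5/37}$ directly into \eqref{eq:kolm}--\eqref{eq:u,v}. A short computation gives $1-t^2=32/37$, so that $u=(x-t)/\sqrt{1-t^2}=8/\sqrt{10}$, matching---by design---the atom of $\e{10}\cdot\vp$ with $k=8$, while $v=(x+t)/\sqrt{1-t^2}=21/(2\sqrt{10})>\sqrt{10}$ strictly exceeds the rightmost atom $\sqrt{10}$ of $\e{10}\cdot\vp$, so that the second tail term in \eqref{eq:kolm} vanishes. Summing the two leading binomial coefficients then yields
\begin{equation*}
\P\bigl(\f{10}{t}\cdot\vp\ge x\bigr)=\tfrac12\,\P\bigl(\e{10}\cdot\vp\ge 8/\sqrt{10}\bigr)=\frac{\binom{10}{9}+\binom{10}{10}}{2^{11}}=\frac{11}{2048}.
\end{equation*}

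For the strict inequality on the right, the plan is to show that $\P(\e m\cdot\vp\ge\sqrt{37/5})\le 1/256=8/2048$ for every $m\in\N$, with the maximum attained at $m=8$. This breaks into two parts. (i) For $m$ up to some explicit cutoff $m_0$, I would locate, for each $m$, the smallest $k=k(m)\in\{0,\dots,m\}$ with $(2k-m)/\sqrt{m}\ge\sqrt{37/5}$ and compute the finite binomial tail $\sum_{j=k(m)}^m\binom{m}{j}/2^m$; since for $m\le 7$ the largest atom $\sqrt m$ of $\e m\cdot\vp$ does not even reach $\sqrt{37/5}$, the enumeration effectively starts at $m=8$, where it already attains the value $1/256$, and a short table up to $m_0$ verifies that $1/256$ is never exceeded. (ii) For $m>m_0$, I would invoke an explicit quantitative tail bound (for instance, a Berry--Esseen estimate with known numerical constant, or the sharp Rademacher-sum bound recently established in \cite{G-R_arxiv}) to conclude that the probability stays strictly below $11/2048$.

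The main obstacle lies in step (ii): the central limit theorem only delivers the asymptotic value $\G(\sqrt{37/5})\approx 0.0033$, while a naive Hoeffding bound $e^{-37/10}\approx 0.0247$ is far too weak to beat $11/2048\approx 0.0054$. A sharper explicit estimate---either Berry--Esseen combined with a sufficiently large cutoff $m_0$ in the finite enumeration, or a Rademacher-specific refinement---is what bridges the gap between the case-by-case calculation and the asymptotic regime.
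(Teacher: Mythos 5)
Your overall strategy coincides with the paper's: the first inequality is immediate from $\f{n}{t}\in\Si$; your exact evaluation of the middle term is correct ($u=8/\sqrt{10}$ is an atom, $v=21/(2\sqrt{10})>\sqrt{10}$ kills the second term in \eqref{eq:kolm}, giving $\P(\f{10}{t}\cdot\vp\ge x)=\tfrac12\cdot\tfrac{11}{1024}=\tfrac{11}{2048}$, consistent with the formula $(n+1)/2^{n+1}$ quoted later in the paper); and your plan for $M^=(x)$ --- finite enumeration up to a cutoff plus an explicit Berry--Esseen bound beyond it --- is exactly what the paper does, with constant $0.4748$ and cutoff $50640$.

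There is, however, one concrete false claim in your step (i): it is not true that $\P\bigl(\e m\cdot\vp\ge\sqrt{37/5}\bigr)\le 1/256$ for every $m$, nor is the supremum attained at $m=8$. For instance, at $m=29$ the threshold is $\sqrt{7.4\cdot 29}=\sqrt{214.6}\approx 14.649$, so the tail starts at $j=22$ and
\begin{equation*}
\P\bigl(\e{29}\cdot\vp\ge \sqrt{37/5}\,\bigr)=2^{-29}\sum_{i=0}^{7}\binom{29}{i}=\frac{2182396}{536870912}\approx 0.004065>\frac1{256}\approx 0.003906.
\end{equation*}
This does not sink the argument, since the only inequality actually needed is $\sup_m\P(\e m\cdot\vp\ge x)<11/2048\approx 0.005371$, and an honest enumeration would simply reveal the true, larger maximum; but the target you set for part (i) is unattainable as stated and must be replaced by the weaker bound $<11/2048$. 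Relatedly, the Berry--Esseen step you defer to is numerically very tight: with $\G(\sqrt{37/5})\approx 0.00326$ one needs $0.4748/\sqrt{m_0+1}<11/2048-\G(x)\approx 0.00211$, which forces $m_0$ on the order of $5\times 10^4$ (the paper takes $m_0=50640$, for which $\G(x)+0.4748/\sqrt{50641}<11/2048$ holds only by a few units in the sixth decimal place). So the ``short table up to $m_0$'' is in reality a machine check over tens of thousands of values of $m$, not a hand computation.
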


In the hindsight, it is hardly a coincidence that for the disproving triple $(n,x,t)=\big(10,\sqrt{\tfrac{37}5},\sqrt{\tfrac5{37}}\big)$ one has $t=1/x$. Indeed, for a given $u=k/\sqrt n$, it makes sense to choose $x$ to be as large as possible, in order to make $M^=(x)$ as small as possible. 
That is, one can try to take the largest value of $x$ for which equation \eqref{eq:eq} has a solution $t\in(0,1)$. Thus, one obtains 
\begin{equation*}%\label{eq:x,t}
	x=\sqrt{1+k^2/n}\quad\text{and}\quad t=1/x.
\end{equation*}

To prove Proposition~\ref{prop:}, we shall need the Berry--Esseen inequality 
\begin{equation}\label{eq:BE}
	\Big|\P\Big(\frac1{\sqrt n}\sum_1^n X_i\ge x\Big)-\G(x)\Big|\le\frac{C}{\sqrt n}\,\E|X_1|^3  
\end{equation}
for all $n\in\N$ and $x\in\R$, where $X_1,X_2,\dots$ are independent identically distributed zero-mean unit-variance r.v.'s and $C$ is an absolute constant. 
The apparently smallest currently known value of $C$, equal to $0.4748$, is due to Shevtsova~\cite{shev11}; a slightly larger value, $0.4785$, was established earlier by Tyurin~\cite{tyurin}. 

\begin{proof}[Proof of Proposition~\ref{prop:}]
The first inequality in \eqref{eq:main} follows immediately from the definitions of $M(x)$ and $\f nt$, since $\f nt\in\Si$. 
Next, take indeed $n=10$, $x=\sqrt{\tfrac{37}5}$, and $t=\sqrt{\tfrac5{37}}$. 
Then, by \eqref{eq:BE} 
%the Berry--Esseen inequality 
with the mentiond constant factor $C=0.4748$, for all natural $j>50640$ 
\begin{equation*}
	\P\big(\e j\cdot\vp\ge x\big)\le\G(x)+\tfrac{0.4748}{\sqrt{50641}}<\P\big(\f nt\cdot\vp\ge x\big). 
\end{equation*}
On the other hand, 
it is straightforward to check that 
\begin{equation*}
	\P\big(\f nt\cdot\vp\ge x\big)>\max_{1\le j\le50640}\P\big(\e j\cdot\vp\ge x\big)  
\end{equation*}
(which takes about 20 minutes to do using Mathematica).   
Recalling now the definition of $M^=(x)$, one obtains the second inequality in \eqref{eq:main} as well. 
\end{proof} 

Soon after finding the counterexample described above and informing \break 
A.~V.~Zhubr about it, I received a response from him with a copy of his paper \cite{zhubr}. 
It turns out that in it, not conjecture \eqref{eq:conjec}, but the following related ``finite-dimensional'' conjecture was disproved: 
\begin{equation}\label{eq:conj n}
	M_n(x)\overset{(?)}=M^=_n(x) 
\end{equation}
for all $n\in\N$ and $x\in\R$, 
where 
\begin{align*}
	M_n(x)&:=\max\big\{\P(a\cdot\vp\ge x)\colon a=(a_1,\dots,a_n,0,0,\dots)\in\Si\big\} \quad\text{and}\\ 
	M^=_n(x)&:=\max_{j\in\N,\;j\le n}\P\big(\e j\cdot\vp\ge x\big). 
\end{align*}

How do conjectures \eqref{eq:conjec} and \eqref{eq:conj n} relate to each other? 

To approach an answer to this question, let us start by taking any $x\in\R$. Then take any sequence $a=(a_1,a_2,\dots)\in\Si$. %; at that, without loss of generality $a_1>0$. 
For each large enough $n\in\N$% (so that $a_1^2+\dots+a_n^2>0$)
, consider the ``truncated'' version $\aaaa n:=(a_1,\dots,a_n,0,0,\dots)/\sqrt{a_1^2+\dots+a_n^2}$ of the sequence $a$, so that 
$\aaaa n\in\Si$ and %$\bb n\to b$ in $\ell^2$ and hence 
$\aaaa n\cdot\vp\to a\cdot\vp$ almost surely as $n\to\infty$. Hence, by the Fatou lemma, %\break  
\begin{equation}\label{eq:fatou}
	\liminf_{n\to\infty}\P\big(\aaaa n\cdot\vp\ge x\big)\ge\P\big(a\cdot\vp\ge x\big). 
\end{equation}
In particular, it follows that 
\begin{equation}\label{eq:approx}
	M_n(x)\uparrow M(x) 
\end{equation}
for each $x\in\R$ 
as $n\to\infty$. 

More specifically, suppose now that the real number $x$ and the sequence $a=(a_1,a_2,\dots)\in\Si$ disprove conjecture \eqref{eq:conjec} in the sense that $\P\big(a\cdot\vp\ge x\big)>M^=(x)$ -- cf.\ \eqref{eq:main}. 
Then, by \eqref{eq:fatou}, $\P\big(\aaaa n\cdot\vp\ge x\big)>M^=(x)$ for all large enough $n\in\N$. 
Therefore and because $M^=(x)\ge M^=_n(x)$, it follows that, if conjecture \eqref{eq:conjec} is disproved, then conjecture \eqref{eq:conj n} is disproved as well.  

However, we shall now see that the vice versa implication is not true, in the sense that one can find some $n\in\N$, $a=(a_1,\dots,a_n,0,0,\dots)\in\Si$, and $x\in(0,\infty)$ such that $\P(a\cdot\vp\ge x)>M^=_n(x)
%\max_{j\in\N,\;j\le n}\P\big(\e n\cdot\vp\ge x\big)
$ and yet $\P(a\cdot\vp\ge x)<M^=(x)$; in fact, one can find such $a$ and $x$ for every large enough $n\in\N$.  

Indeed, 
%consider the following series of counterexamples (to conjecture \eqref{eq:conj n}) given in \cite{zhubr}, based there on certain geometric, rather than probabilistic, considerations: 
for each natural $n\ge8$ let 
\begin{equation}\label{eq:series}
	\aaa n:=\f n{t_n}\quad\text{and}\quad t_n:=1/x_n,\quad\text{where}\quad x_n:=\sqrt{n-3+4/n}    
\end{equation}
and $\f nt$ is as in \eqref{eq: f nt}; 
then, as shown in \cite[Theorem~3 and Corollary~4]{zhubr}, 
\begin{equation*}
	M_n(x_n)\ge\P(\aaa n\cdot\vp\ge x_n)=(n+1)/2^{n+1}>%\max_{j\in\N,\;j\le n}\P\big(\e j\cdot\vp\ge x_n\big)=
	M^=_n(x_n),  
\end{equation*}
which disproves conjecture \eqref{eq:conj n}.  
This series of counterexamples %to conjecture \eqref{eq:conj n} 
was obtained in \cite{zhubr} based on certain geometric, rather than probabilistic, considerations (and using rather different notations). 

However, for all large enough $n$ and $\aaa n$ as in \eqref{eq:series}, 
\begin{equation}\label{eq:less} 
\P(\aaa n\cdot\vp\ge x_n)<M^=(x_n). 	
\end{equation}
Indeed, 
for $n\to\infty$ one has $\P(\aaa n\cdot\vp\ge x_n)=(n+1)/2^{n+1}=2^{-n(1+o(1))}$, which is asymptotically much less than $\G(x_n)=\exp\{-x_n^2/(2+o(1))\}=(\sqrt e\,)^{-n(1+o(1))}$; 
%\exp\{-n/(2+o(1))\}$; 
in fact, it is rather easy (even if a bit tedious) to see that $\P(\aaa n\cdot\vp\ge x_n)<\G(x_n)$ for all natural $n\ge16$ 
%\rademacher_asymp\edelman_conjec\prob.less.than.G.nb 
and hence 
\eqref{eq:less} holds 
%$\P(\aaa n\cdot\vp\ge x_n)<M^=(x_n)$ 
for such $n$ --- because, again by the central limit theorem (cf.\ \eqref{eq:lower}), $M^=(x)\ge\G(x)$ for all $x\in\R$. 

On the other hand --- for $n=10$, and $x_n$ and $t_n$ as in \eqref{eq:series} --- one has 
%(somewhat coincidentally) 
$x_n=\sqrt{\tfrac{37}5}$ and hence $t_n=\sqrt{\tfrac5{37}}$, so that (as was pointed out to me by A.\ V.\ Zhubr) the example given in Proposition~\ref{prop:} turns out to belong to the series of examples defined in \eqref{eq:series}. 
Moreover, one can check, just as easily as in the proof of Proposition~\ref{prop:}, that \eqref{eq:main} will hold with 
the triple $(n,x_n,t_n)=\big(8,\sqrt{\frac{11}2},\sqrt{\frac2{11}}\big)$ or $(n,x_n,t_n)=\big(9,\sqrt{\frac{58}9},\sqrt{\frac9{58}}\big)$ in place of the triple $(n,x,t)=\big(10,\sqrt{\tfrac{37}5},\sqrt{\tfrac5{37}}\big)$ used in Proposition~\ref{prop:}. 
In fact, each of these two checks, for $n=8$ and $n=9$, requires much less computer time, respectively about 4 seconds and one minute  instead of 20 minutes, since one can then use much smaller threshold values: $3461$ for $n=8$ and $12775$ for $n=9$, in place of the threshold value $50640$ for $n=10$ used in the proof of Proposition~\ref{prop:}. 
Quite possibly, \eqref{eq:main} may similarly hold for %$n\in\{9,\dots,15\}$
some other triples of the form $(n,x_n,t_n)$ with $x_n$ and $t_n$ as in \eqref{eq:series} and  $n\in\{11,\dots,15\}$; however,  checking that for any one of these 5 values of $n$ seems to require too much computer time.  
On the other hand, as was noted, \eqref{eq:main} will not hold for 
any such triples $(n,x,t)=(n,x_n,t_n)$ with $n\in\{16,17,\dots\}$.  

One may also note that, in view of \eqref{eq:approx}, conjecture \eqref{eq:conjec} can be restated in a quasi-finite-dimensional form, as 
$\sup_{n\in\N}M_n(x)\overset{(?)}=M^=(x)$ for all $x\in\R$. 

Observe that $M^=(x)$ is the supremum of $\P(a\cdot\vp\ge x)$ over all sequences $a\in\Si$ taking only one nonzero value. 
A question that remains open here is whether, for all $x\in\R$,  
the supremum \big(equal by definition to $M(x)$\big) of $\P(a\cdot\vp\ge x)$ over all sequences $a\in\Si$ is the same as that over all sequences $a\in\Si$ taking 
exactly (or, equivalently, at most) two nonzero values (cf.\ \cite[Theorem~6]{zhubr}). 
More generally, one may ask whether, for some $q\in\{2,3,\dots\}$ and all $x\in\R$, the value of $M(x)$ is equal to the supremum of $\P(a\cdot\vp\ge x)$ over all sequences $a\in\Si$ 
taking 
exactly (or, equivalently, at most) $q$  
%a given finite number (say $q$) of 
nonzero values. %, where $q$ is given number in the set $\{2,3,\dots\}$.  

%A question that remains open here is whether the supremum (equal by definition to $M(x)$) of $\P(a\cdot\vp\ge x)$ over all sequences $a\in\Si$ is the same as that over all sequences $a\in\Si$ taking exactly (or, equivalently, at most) two nonzero values (cf.\ \cite[Theorem~6]{zhubr}) or, more generally, over all sequences $a\in\Si$ taking exactly $q$ (or, equivalently, no more than $q$) 
%%a given finite number (say $q$) of 
%nonzero values, where $q$ is given number in the set $\{2,3,\dots\}$.  

\bibliographystyle{abbrv}
%\bibliographystyle{ims}
%\bibliography{are.citations}
%\bibliography{citat}

%\bibliography{citations}

{\footnotesize
\bibliography{C:/Users/Iosif/Dropbox/mtu/bib_files/citations}

\def\cprime{$'$} \def\polhk#1{\setbox0=\hbox{#1}{\ooalign{\hidewidth
  \lower1.5ex\hbox{`}\hidewidth\crcr\unhbox0}}}
  \def\polhk#1{\setbox0=\hbox{#1}{\ooalign{\hidewidth
  \lower1.5ex\hbox{`}\hidewidth\crcr\unhbox0}}}
  \def\polhk#1{\setbox0=\hbox{#1}{\ooalign{\hidewidth
  \lower1.5ex\hbox{`}\hidewidth\crcr\unhbox0}}} \def\cprime{$'$}
  \def\polhk#1{\setbox0=\hbox{#1}{\ooalign{\hidewidth
  \lower1.5ex\hbox{`}\hidewidth\crcr\unhbox0}}}
  \def\polhk#1{\setbox0=\hbox{#1}{\ooalign{\hidewidth
  \lower1.5ex\hbox{`}\hidewidth\crcr\unhbox0}}}
\begin{thebibliography}{10}

\bibitem{BGH}
S.~G. Bobkov, F.~G{\"o}tze, and C.~Houdr{\'e}.
\newblock On {G}aussian and {B}ernoulli covariance representations.
\newblock {\em Bernoulli}, 7(3):439--451, 2001.

\bibitem{edelman-comm}
D.~Edelman.
\newblock Private communication, 1994.

\bibitem{g-peskir}
S.~E. Graversen and G.~Pe{\v{s}}kir.
\newblock Extremal problems in the maximal inequalities of {K}hintchine.
\newblock {\em Math. Proc. Cambridge Philos. Soc.}, 123(1):169--177, 1998.

\bibitem{G-R_arxiv}
I.~Pinelis.
\newblock An asymptotically {G}aussian bound on the {R}ademacher tails,
  preprint, \url{http://arxiv.org/find/all/1/au:+pinelis/0/1/0/all/0/1}.

\bibitem{maximal}
I.~Pinelis.
\newblock On exact maximal {K}hinchine inequalities.
\newblock In {\em High dimensional probability, {II} ({S}eattle, {WA}, 1999)},
  volume~47 of {\em Progr. Probab.}, pages 49--63. Birkh\"auser Boston, Boston,
  MA, 2000.

\bibitem{pin-towards}
I.~Pinelis.
\newblock Toward the best constant factor for the {R}ademacher-{G}aussian tail
  comparison.
\newblock {\em ESAIM Probab. Stat.}, 11:412--426, 2007.

\bibitem{portnoy}
S.~Portnoy.
\newblock Private communication, 1991.

\bibitem{shev11}
I.~Shevtsova.
\newblock On the absolute constants in the {B}erry-{E}sseen type inequalities
  for identically distributed summands.
\newblock \url{http://arxiv.org/abs/1111.6554}, 2011.

\bibitem{tyurin}
I.~Tyurin.
\newblock New estimates of the convergence rate in the {L}yapunov theorem
  (preprint, ar{X}iv:0912.0726v1 [math.{PR}]).

\bibitem{zhubr}
A.~V. Zhubr.
\newblock On one extremal problem for ${N}$-cube.
\newblock To appear, 2012.

\end{thebibliography}
%\bibliography{C:/Users/Iosif/Documents/mtu_home01-30-10/bib_files/citations}
%\bibliography{C:/Users/Iosif/Documents/mtu_home12-22-08/bib_files/citations}
}

\end{document}